\newtheorem{theorem}{Theorem}
\title{Dota Underlords game is NP-complete}
\author{
  Alexander A. ~Ponomarenko, Dmitry V. ~Sirotkin \\
  Laboratory of Algorithms and Technologies for Network Analysis\\
  National Research University Higher School of Economics \\
  Nizhny Novgorod, Russia\\
  \texttt{aponomarenko@hse.ru, dsirotkin@hse.ru} \\
}
\begin{document}
\maketitle

\begin{abstract}
In this paper, we demonstrate how the problem of the optimal team choice in the popular computer game Dota Underlords can be reduced to the problem of linear integer  programming. We propose a model and solve it for the real data. We also prove that this problem belongs to the NP-complete class and show that it reduces to the maximum edge weighted clique problem.

\end{abstract}


\section{Introduction}
People love to play games. Many games and puzzles that people play are interesting by its complexity: you need to be smart enough to solve it. In many cases, such complexity can be expressed as computation complexity depending on input size. For example, it has been shown \cite{fraenkel1981computing} that the chess game belongs to the EXPTIME class complexity; 
decision problem of player in legendary video game ``Tetris" is NP-hard \cite{breukelaar2004tetris}. It was shown that the puzzle ``Sokoban"  has polynomially solvable \cite{hearn2005pspace}.

The special place in theoretical computer science has NP-complete computational class.
It was found in \cite{kaye2000minesweeper} that ``Minesweeper" belongs to the NP-complete class. The problem of finding a minimal number of chip movements in a generalized version of 15-puzzle for the board of size $N \times N$ belongs to the NP-class also  \cite{ratner1986finding}.

In some sense, every NP-complete problem is a puzzle, and vice-verse, many puzzles are NP-complete. For a deeper study of the topic of the computational complexity of puzzles and games, we refer the reader to the review \cite{costa2018computational}.

In this paper, we consider a popular video game Dota Unlderlords. It is one of the so-called auto-chess games. It turns out that this problem can be represented as a combinatorial optimization problem, which belongs to the class NP-complete.

The article organized as follows. In section \ref{SectionDUDescription} Dota Underlords gameplay is described.
We present the formulation of the Dota Underlords problem as a linear integer programming problem in section \ref{SectionDUIP}. In the \ref{SectionNPCompleteProof} section, we show the NP-completeness of this task and reduce the problem to the maximum edge-weighted clique problem. The solution of the integer programming model for the real data is published in section \ref{SectionComputationalResults}. Traditionally, we summarize in the section \ref{SectionConclusion} ``Conclusion".

\section{Dota Underlords game play description}
\label{SectionDUDescription}

During the game, eight players build a team of ``heroes"-- creatures that can fight each other on the game map. Each of the heroes has basic parameters: health, damage, attack speed, and others, as well as a special ability that determines its role in the game. Each hero belongs to two or more ``alliances"-- sets that unite several heroes. For example, the hero Enchantress belongs simultaneously to the alliance ``druids" and to the alliance ``predators". When there are several heroes in the team who belong to the same alliance (for each alliance this number is individual), the player receives a bonus consisting of improving the characteristics of his heroes or worsening the characteristics of his enemy's heroes.

Also during the game, you can strengthen your heroes by upgrading them to higher levels or by purchasing in-game items. In this work, these aspects will not be taken into account.

Thus, the strength of a players team is determined by:

\begin{enumerate}
    \item The power of selected heroes
    \item Bonuses from the alliances which they are belong
\end{enumerate}

\section{Problem reformulation to the linear integer programming language }
\label{SectionDUIP}

\subsection{The simplest problem statement}

We formalize the problem as follows. We assume that in total we have $n$ heroes to choose from. We assume that the strength (power) of some $i$-th hero is presented by some non-negative value $s_i$. As $x_i$ we denote the belonging of hero $i$ to the team. Let $ x_i = 1 $, if the $ i $ -th hero belongs to the players team and $ x_i = 0 $ otherwise. The condition that there is no more than $m$ heroes in a team  can be written as $ \sum_{i=1}^n x_i \leq m $. Then in the simplest form this problem can be expressed as follows:

\begin{equation}
\begin{gathered}
    max \sum_{i=1}^n x_i s_i \\
    \sum_{i=1}^n x_i \leq m \\
    x_i \in \{0, 1\} \text{ – decision variable} \\
   n, m, s_i \text{ – constants}  \\
\end{gathered}
\end{equation}

In this statement, the problem is solved elementarily - the solution is to take $ n $ elements with the largest weights.

\subsection{Problem statement with alliances}
As mentioned, in``Dota Underlords" each hero belongs to two or more ``alliances" --- in turn, each alliance includes several heroes. When a team has several heroes from the same alliance (for each alliance this number is individual), the player receives a bonus, which is expressed in strengthening all the heroes from the alliance, strengthening all his heroes, or weakening all the heroes of the opponent. The last can be interpreted as a relative strengthening of the player’s heroes, and therefore only the first two cases will be considered throughout the work. It should be noted that for one alliance, there can be several bonuses that are unlocked by the different numbers of heroes of the corresponding alliance. These bonuses can also be of various types.

We propose to model this situation by introducing a 3-index tensor $ e_{ijk} \in \mathbb{R} $ which represents a bonus to the hero $i$ from the alliance $j$, in which there are at least $k$ heroes of the alliance $j$. In other words, $ e_{ijk} $ is the $k$-th bonus of the alliance $ j$ for the hero $i$.

Using the tensor $e_{ijk} $, we support both types of alliances -- those that give bonuses to their members and those that give bonuses to all the heroes of the player. Moreover, the alliances of the considered types differ only in one thing. In the alliances that give a bonus to their members, the value of $e_{ijk} $ is zero if and only if the $i$-th hero does not belong to the $j$-th alliance. In the general case, this is not necessarily true. Note that the tensor $ e_{ijk} $ is sparse for the real instances of auto-chess games since the alliances from which bonuses go to all the heroes are few.

We propose to control the occurrence of the bonus $ e_{ijk} $ in the total strength of the team using the control binary variable $ I_{ijk} $.
So we can write down the objective function as the following sum $ \sum_{i = 1}^{n} x_i s_i + \sum_{i=1}^{n} \sum_{j=1}^{t}  \sum_{k=1}^{q} e_{ijk} I_{ijk} $.
The connection between the variables $ x_{i} $ and $ I_{ijk} $ is given by the inequalities
$\forall{i,j,k} :  \sum_{i'=1}^{n} a_{i'j} x_{i'} - k \ge M( I_{ijk} - 1)$. 

These inequalities do not allow the binary variable $ I_{ijk} $ to take the value 1 if the solution includes less than $k$ heroes from the alliance $j$. When the solution contains less than $m$ heroes from the alliance $j$, the left side of this inequality is negative, so for the inequalities to be observed, the right side should be even smaller. It is possible only when the binary variable $ I_{ijk} $ is zero. In this case, the right-hand side is $-M $, where $M$ is a big constant known to be larger than $k$, that is, larger than the maximum size of the alliance $q$.

We require that the bonus for the hero $i$ can be activated ($ I_{ijk} = 1 $) only if the hero $i$ belongs to the solution. This is given by the inequalities $ \forall {i, j, k}: I_{ijk} \le x_i $. We also want the bonus $ e_{ijk} $ to be activated only if the character $ i $ belongs to the alliance $j$. For this, we include in the model inequalities $ \forall{i, j, k} :\, I_{ijk} \le a_{ij} $.

Thus, after introducing the alliances into the model, the system of equations can be written as the following:

\begin{equation}
\label{eq:DUIP}
\begin{gathered}
\textbf{Objective function}\\
max \sum_{i=1}^{n} x_i s_i + \sum_{i=1}^{n} \sum_{j=1}^{t}  \sum_{k=1}^{q} e_{ijk} I_{ijk} \\
\textbf{Constraints for the input data}\\
\forall{j} : \sum_{i=1}^n a_{ij} \le q \\
\textbf{Constraints for the decision variables} \\
\forall{i,j,k} :  \sum_{i'=1}^{n} a_{i'j} x_{i'} - k \ge M( I_{ijk}  - 1) \\
\sum_{i=1}^n x_i \le m   \\ 
\forall{i,j,k} :  I_{ijk}  \le x_i \\
\textbf{Decision variables} \\
I_{ijk} \in \{0, 1\} \text {, 1 – if for the hero } i \text{, the } k\text{-th bonus is activated for }  j \text{-th alliance,} \\
x_i  \in \{0, 1\} \text{, 1 -- if hero } i \text{ belongs to solution} \\
\textbf{Constants} \\
n \in \mathbb{N} \text{ -- number of heroues,} \\
m \in \mathbb{N} \text{ -- maximum size of the team}\\
t \in \mathbb{N} \text{ -- the total number of alliances} \\
q \in \mathbb{N} \text{ -- maximum size of an alliance,} \\
s_i  \in \mathbb{R} \text{ –- the strength of the hero } i, \\
e_{ijk} \in \mathbb{R} \text{ -- the bonus for the hero } i \text{,  if } k
\text{-th bonus is activated for the } j \text{-th alliance} \\
a_{ij} \in \{0, 1\} \text{ -- indicates if hero } i \text{ belongs to the alliance } j \\ 
\end{gathered}
\end{equation}

\section{Proof of an NP-completeness of the Dota Underlords problem}
\label{SectionNPCompleteProof}

To prove that a problem is NP-complete, it is necessary to show that it is both an NP-hard problem and that it belongs to the NP class. Let's us prove both statements.

\subsection{Reduction maximum density sub-graph problem to the Dota Underlords problem}

\begin{theorem}
\label{MEWC_DU}
The problem of finding the maximum dense subgraph of $k$ vertices reduces to the Underlords problem.
\end{theorem}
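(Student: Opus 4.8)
The plan is to reduce from the \emph{densest $k$-subgraph problem}: given a graph $G=(V,E)$ and an integer $k\le |V|$, find a set $S\subseteq V$ with $|S|=k$ that maximizes the number of edges $|E(G[S])|$ of the induced subgraph (for a fixed $k$ this is equivalent to maximizing the density $|E(G[S])|/\binom{k}{2}$). From such an instance I would build a Dota Underlords instance as follows. Introduce one hero for each vertex, so $n=|V|$, with zero strength, $s_i=0$ for all $i$. Introduce one alliance for each edge, so $t=|E|$, with $a_{ij}=1$ exactly when vertex $i$ is an endpoint of edge $j$; every alliance then has exactly two members, so we may take $q=2$ and $M=3$. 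Each alliance $j=\{u,v\}$ carries a single nontrivial bonus, at level $2$: set $e_{uj2}=1$ and all other entries of the tensor to $0$. Finally set the team-size bound $m=k$. The construction is evidently computable in time polynomial in the size of $G$.

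Next I would pin down the correspondence between feasible solutions. Fix any team $T=\{i:x_i=1\}$ with $|T|\le m$. The big-$M$ inequality $\sum_{i'}a_{i'j}x_{i'}-2\ge M(I_{ij2}-1)$ forces $I_{ij2}=0$ whenever fewer than two endpoints of edge $j$ lie in $T$, while if both endpoints lie in $T$ then $I_{ij2}=1$ is consistent with this inequality and with $I_{ij2}\le x_i$. Since every $e_{ijk}\ge 0$ and the objective is maximized, an optimal solution will take $I_{uj2}=1$ for precisely those edges $j=\{u,v\}$ both of whose endpoints are in $T$. Hence, for a fixed team $T$, the best achievable objective value equals $|E(G[T])|$.

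Finally I would match the optima. Because all strengths are $0$ and enlarging $T$ never destroys a spanned edge, an optimal Underlords solution may be assumed to use a team of size exactly $k$ (pad with arbitrary unused heroes if $|V|>k$; the case $|V|<k$ is trivial). Therefore $\max(\text{Underlords objective})=\max_{|S|=k}|E(G[S])|$, and the set $S=\{i:x_i=1\}$ extracted from an optimal Underlords solution (after padding if needed) is an optimal densest $k$-subgraph. So any algorithm for the Underlords problem solves densest $k$-subgraph with only polynomial overhead, which is exactly the asserted reduction.

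The step I expect to be the main obstacle is the bookkeeping around the indicator variables $I_{ijk}$: one must argue carefully that at an optimum they equal the logical conjunction of the relevant $x_i$, invoking both the nonnegativity of the bonuses and the big-$M$ inequalities, and verify that the chosen $M$ is genuinely larger than $q$ for every instance of the constraint. The remaining ingredients — the polynomial size of the reduction and the monotonicity argument that lets us assume $|T|=k$ — are routine.
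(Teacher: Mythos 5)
Your reduction is correct and is essentially the same construction as the paper's: heroes for vertices, two-member alliances for edges with a bonus unlocked only when both endpoints are on the team, equal (here zero) hero strengths, and $m=k$, so the optimal team is exactly a densest $k$-vertex subgraph. Your extra bookkeeping (the big-$M$ check and padding the team to size exactly $k$) only makes explicit details the paper glosses over.
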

\begin{proof}

Consider its special case --- let all the alliances have a size equal to two, and the power of all the heroes is the same. Consider a special case of the Dota Underlords problem with the following restrictions:

\begin{enumerate}
    \item The power of all heroes is the same ($\forall i, j \; s_i=s_j$)
    \item Alliances can give bonuses only to the heroes that belong to the corresponding alliance. ($\forall i, j, k \; a_{ij}=0 \Longrightarrow e_{ijk} = 0$)
    
    \item All alliances have the same size equaling two ($\forall j \; \sum_i a_{ij}=2 $)
    \item All alliances give a bonus if and only if both heroes are present in the team ($\forall i, j \; e_{ij1}=0$)
    \item Bonuses from all alliances are the same ($\forall i, j, i', j' \; a_{ij}=1,\, a_{i' j'}=1 \Longrightarrow e_{ij2}=e_{i' j' 2}$)
\end{enumerate}

Then the data can be represented in the form of a graph $G(V, E) $, where the set of vertices $ V $ corresponds to the heroes, and the set of edges $E$ corresponds to the active alliances. You may notice that in this case, the optimal team of size $ k $ corresponds to the densest subgraph $ G' \subset G $ with $ k $ vertices. Density in this formulation can be understood as the value $ \frac{G'(E)}{G'(V)} $. Indeed, under these restrictions, the total strength of the team linearly depends on the number of active alliances, which corresponds to $ G'(E)$. Since $ k $ is invariable, with the increasing density of the graph $ G '$ the total strength of the team grows.

It was shown in \cite{downey1995fixed} that the problem of fining subgraph with a fixed size and maximum density is NP-complete. We have shown that it is a special case of the Dota Underlords problem, so it is reducible to Dota Underlords. Therefore the Dota Underlords problem is no less difficult than the well-known NP-complete problem. Thus the Dota Underlords problem is NP-hard.

\end{proof}

\subsection{Dota Underlords belongs to NP-class}
The decision version of Dota Underlords problem (problem with the answer ``yes" or ``no") can be formulated as follows: ``\textit{Is there a team with at most $ m $ heroes and with a total power greater than some given constant?}". Then, we are ready to state the following theorem.
\begin{theorem}
\label{DU_is_NP}
The decision version of Dota Underlords problem belongs to the  NP class.
\end{theorem}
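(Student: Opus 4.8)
The plan is to show that every ``yes'' instance of the decision version admits a certificate of polynomial size that can be verified in polynomial time; by definition this places the problem in NP. The natural certificate is a proposed team together with its bonus activations, i.e.\ a full assignment of the binary decision variables $x_1,\dots,x_n$ and $I_{ijk}$ of the model \eqref{eq:DUIP}. Such a certificate consists of $n + ntq$ bits, which is polynomial in the size of the instance --- the instance already contains the $n\times t\times q$ tensor $e_{ijk}$ and the $n\times t$ matrix $a_{ij}$, so $ntq$ and $nt$ are bounded by the input length.

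Given such a candidate assignment, the verifier proceeds as follows. First it checks all feasibility constraints of \eqref{eq:DUIP}: the cardinality bound $\sum_{i=1}^{n} x_i \le m$; the coupling inequalities $\sum_{i'=1}^{n} a_{i'j} x_{i'} - k \ge M(I_{ijk}-1)$ for every triple $i,j,k$; and the domination inequalities $I_{ijk} \le x_i$ (and, if included, $I_{ijk}\le a_{ij}$). Each such check is a comparison of integers obtained as a sum of at most $n$ entries of the input, there are $O(ntq)$ of them, and the auxiliary constant may be fixed to $M = q+1$, whose encoding is short; hence this phase runs in polynomial time. The verifier then computes the objective value $\sum_{i=1}^{n} x_i s_i + \sum_{i=1}^{n}\sum_{j=1}^{t}\sum_{k=1}^{q} e_{ijk} I_{ijk}$ and accepts if and only if this value strictly exceeds the given threshold $C$. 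This is a polynomial number of additions and one comparison, so the whole verification is polynomial.

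Finally one argues soundness and completeness of the certificate: if the instance is a ``yes'' instance there is a feasible assignment with objective value $> C$, and that assignment is accepted; conversely, any accepted certificate is by construction a feasible team of at most $m$ heroes with total power $> C$, so it witnesses a ``yes'' answer. (The certificate can even be reduced to the vector $(x_i)$ alone, since for a fixed team each $I_{ijk}$ is best set to $1$ exactly when $e_{ijk}>0$, $x_i=1$, $a_{ij}=1$ and $\sum_{i'} a_{i'j} x_{i'} \ge k$, and the verifier can reconstruct this in polynomial time; but this is not needed.) The only delicate point, and essentially the sole obstacle, is the model of computation for the real-valued data: one must assume that the weights $s_i$ and bonuses $e_{ijk}$ are given as rationals of bounded encoding length (so that ``polynomial time'' is meaningful) and must make sure the big constant $M$ does not blow up --- both issues are resolved at once by taking $M=q+1$ and working with rational inputs. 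This establishes that the decision version of the Dota Underlords problem belongs to NP.
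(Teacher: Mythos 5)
Your proposal is correct and follows essentially the same route as the paper: exhibit the team (together with, or equivalently reconstructing, the bonus activations) as a polynomial-size certificate and verify the constraints and the objective value in $O(ntq)$ arithmetic operations. Your version is merely more explicit about the threshold comparison, the choice $M=q+1$, and the rational encoding of $s_i$ and $e_{ijk}$, which the paper leaves implicit.
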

\begin{proof}

By the definition of the NP-class, the problem belongs to the NP class, when the presented solution can be checked in polynomial time. In our case, the solution is a set of $m$ heroes. To verify the solution, we need to calculate the total power of the team. 

So, we just need to calculate the objective function. In turn, to do that, at first we need to find out what alliances are formed. This means that we need to calculate the number of non-zero elements for each column in the matrix $a_{ij}$, taking into account only rows corresponding to heroes from the team i.e  we mean submatrix $\{a_{ij}:  j \in \overline{1,m},\; i \in \{  i' \in \overline{1,n} :   x_{i'} = 1 \}  \}$. It is can be done for $O(nt)$ operations. 

After that, the objective function can be calculated in a straightforward way by $O(ntq)$ number of operations. Thus, we need can check the solution for the polynomial time on the input size. 
\end{proof}

\subsection{NP-completeness of Dota Underlords problem}
\begin{theorem}
	The Dota Underlords problem defined by the system of inequalities \eqref{eq:DUIP}  belongs to the class of NP-complete problems.
\end{theorem}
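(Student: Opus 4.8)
The plan is to combine the two theorems already proved. Theorem~\ref{MEWC_DU} shows the Dota Underlords problem is NP-hard (via reduction from the fixed-size densest subgraph problem), and Theorem~\ref{DU_is_NP} shows its decision version lies in NP. A problem is NP-complete precisely when it is both NP-hard and a member of NP, so the proof is essentially a one-line invocation of these two results.

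More concretely, I would first recall the decision version of the Dota Underlords problem: given the data $n,m,t,q$, the tensors $s_i$, $e_{ijk}$, and the matrix $a_{ij}$, together with a threshold $W$, decide whether there is a choice of $x_i \in \{0,1\}$ with $\sum_i x_i \le m$ and induced optimal $I_{ijk}$ whose objective value exceeds $W$. By Theorem~\ref{DU_is_NP} this decision problem is in NP, since a candidate team of at most $m$ heroes serves as a polynomial-size certificate that can be verified in $O(ntq)$ time. Next I would note that the reduction in Theorem~\ref{MEWC_DU} is a polynomial-time many-one reduction from the densest-$k$-subgraph decision problem, which is NP-complete by \cite{downey1995fixed}; hence every problem in NP reduces to Dota Underlords, i.e.\ it is NP-hard. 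Combining these, the Dota Underlords problem is NP-complete.

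I would also address one small technical point for cleanliness: Theorem~\ref{MEWC_DU} is phrased for the optimization version, so I would remark that the reduction carries over verbatim to the decision versions (a $k$-vertex subgraph of density at least $d$ exists iff a team of size $k$ with power at least the corresponding threshold exists), which is what is needed to invoke NP-hardness of the decision problem. There is no real obstacle here; the only thing to be careful about is making the optimization/decision correspondence explicit so that the two theorems can be glued together legitimately.
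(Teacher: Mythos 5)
Your proposal is correct and follows essentially the same route as the paper: the paper's proof likewise just combines Theorem~\ref{MEWC_DU} (NP-hardness via the reduction from the fixed-size densest subgraph problem) with Theorem~\ref{DU_is_NP} (membership in NP) to conclude NP-completeness of the decision version. Your extra remark making the optimization/decision correspondence explicit is a welcome bit of rigor that the paper glosses over, but it does not change the argument.
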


\begin{proof}
Theorem \ref{MEWC_DU} states that there exists a polynomial reduction of an NP-complete problem to DU. At the same time, by the theorem \ref{DU_is_NP} we showed that the problem DU belongs to the NP class. Thus, the DU problem is NP-hard, and at the same time, it lies in the NP class. Therefore the decision version of Dota Underlords problem is NP-complete.	
\end{proof}

\subsection{Reduction from the Dota Underlords problem to the maximum edge-weighted clique problem}
While working on the paper, we also found a reduction from the Dota Underlords problem to the well-known problem --- Maximum Edge Weighted Clique (MEWC). Thus, when solving individual instances of the Dota Underlords problem, anyone can use the already developed algorithms for the MEWC problem such as \cite{san2019new} or efficient quadratic formulations from \cite{hosseinian2017maximum}.

In this reduction, we will consider a problem with the maximum size of the alliance bounded by some constant $q$. The reduction will be proposed through the series of theorems, where each theorem describes the reduction from a less and less simplified version of DU to the maximum edge-weighted clique problem.

\begin{theorem}
\label{trivial_case}
    The Dota Underlords problem without alliances is reduced to the  maximum edge-weighted clique problem.
\end{theorem}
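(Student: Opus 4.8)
The plan is to construct, from any instance of the Dota Underlords problem \emph{without alliances}, an equivalent instance of the maximum edge-weighted clique (MEWC) problem on a suitable auxiliary graph. Recall that without alliances the objective collapses to $\max \sum_{i=1}^n x_i s_i$ subject to $\sum_{i=1}^n x_i \le m$, i.e. we must pick at most $m$ heroes of largest total strength. So the whole content of the reduction is to encode a ``pick a set of size (at most) $m$'' constraint as a ``pick a clique'' constraint, and to encode the linear vertex weights $s_i$ into \emph{edge} weights.

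First I would build the graph. Take $G=(V,E)$ with one vertex $v_i$ per hero $i$, and make $G$ complete, $E=\{\,v_iv_j : i\neq j\,\}$, so that every subset of heroes of a given size is automatically a clique; this reduces ``choose a team'' to ``choose a clique.'' Next I would handle the team-size bound. Since MEWC in its plain form maximizes over cliques of \emph{all} sizes, I would either (a) work with the variant of MEWC in which the clique size is a fixed parameter $k$ and solve it for each $k\in\{1,\dots,m\}$, taking the best --- a polynomial overhead --- or (b) add $m$ extra mutually-nonadjacent ``dummy'' vertices of strength $0$ so that a maximum team always has size exactly... no; cleaner is (a), or simply observe that with nonnegative $s_i$ an optimal team has size exactly $m$ (pad with zero-strength dummies if fewer than $m$ heroes exist), so it suffices to reduce to MEWC restricted to $m$-cliques.

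The remaining step is the weight bookkeeping: I would assign to each edge $v_iv_j$ the weight $w_{ij} = \frac{s_i + s_j}{m-1}$. Then for any $m$-clique $S$, the total edge weight is $\sum_{\{i,j\}\subseteq S} \frac{s_i+s_j}{m-1} = \frac{1}{m-1}\sum_{i\in S} s_i \cdot (m-1) = \sum_{i\in S} s_i$, because each hero $i\in S$ appears in exactly $m-1$ edges of the clique. Hence an $m$-clique of maximum edge weight in $G$ corresponds exactly to a team of $m$ heroes of maximum total strength, and the optimal values coincide; the construction of $G$ and of the weights $w_{ij}$ is clearly polynomial in $n$, completing the reduction.

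The main obstacle --- really the only subtlety --- is reconciling the ``at most $m$'' cardinality constraint of DU with the fact that a maximum-weight clique in MEWC ranges over cliques of every size, and that the DU weight is linear in the vertices whereas MEWC scores edges; both are dispatched by the standard trick of fixing the clique size to $m$ (padding with zero-strength dummy heroes if necessary, which is harmless since $s_i\ge 0$) and by the uniform edge-weight rescaling $w_{ij}=(s_i+s_j)/(m-1)$ above, which exploits the fact that in an $m$-clique every vertex has degree exactly $m-1$. With $q$ later allowed to be positive, the subsequent theorems will layer the alliance bonuses on top of this base construction, so it is worth phrasing the bijection between $m$-teams and $m$-cliques explicitly here for reuse.
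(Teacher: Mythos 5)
Your weight bookkeeping is exactly the paper's: it also assigns the edge between two chosen vertices the weight $\frac{s_a}{m-1}+\frac{s_{a'}}{m-1}$ and uses the fact that each vertex of an $m$-clique meets $m-1$ clique edges. The genuine gap is in how you handle the team-size bound, which you yourself identify as the only subtlety but then resolve by changing the target problem. The theorem asks for a reduction to the maximum edge-weighted clique problem itself, where the clique size is \emph{not} prescribed; on your complete graph with nonnegative weights $w_{ij}=(s_i+s_j)/(m-1)$ the maximum edge-weighted clique is simply the entire vertex set, so the constraint $\sum_i x_i\le m$ is lost. Retreating to ``MEWC restricted to $m$-cliques'' (or solving the fixed-size variant for each $k\le m$) is a reduction to a \emph{different} problem, and it does not by itself yield a reduction to MEWC --- the missing step is precisely to encode the cardinality bound inside an unconstrained MEWC instance, which is the whole content of this theorem (and is needed so that the cited MEWC algorithms apply). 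The abandoned dummy-vertex idea would not repair this either: zero-weight dummies do not prevent the optimal clique from swallowing all real heroes at once.

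The paper closes this hole structurally rather than by constraining the clique size: it builds $m$ disjoint copies $V^1,\dots,V^m$ of the hero set and joins $v^i_a$ to $v^{i'}_{a'}$ only when $i\neq i'$ and $a\neq a'$. Any clique then has at most one vertex per copy, hence at most $m$ vertices, and its vertices name pairwise distinct heroes, so maximum cliques correspond exactly to teams of $m$ distinct heroes; with the same edge weights as yours, the unconstrained maximum edge-weighted clique picks out an optimal team. To salvage your version you would have to additionally reduce the fixed-size clique variant to plain MEWC, which essentially amounts to reproducing this copy construction; note also that the subsequent theorems with alliances are layered on top of that multi-copy graph $G$, so the complete-graph shortcut would not carry over to them.
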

\begin{proof}
    
   We construct a graph $G$ with weighted edges such that the solution of the problem DU (Dota Underlords) follows from the solution of the problem MEWC (Maximum Edge-Weighted Clique). Moreover, the size of the MEWC problem is limited by a polynomial on the size of the DU problem.
    We construct the set $V^1 $ of $n$ vertices corresponding to the set of heroes in the DU problem. To each vertex we assign one of the heroes from the DU problem -- or, in other words, we name each vertex in honor of one of the heroes of the DU problem. We enumerate these vertices according to the order of the heroes $v_1^1$, $v_2^1$, ..., $ v_n^1$
    We additionally construct $m-1$ sets of vertices $ V^2 $, $ V^3 $, and so on up to $ V^m $, in each we also name one vertex in honor of one of the heroes of the DU problem. Similarly to the first set, we enumerate the vertices in the set $V^i $ as $ v_1^i $, $v_2^i$, ..., $v_n^i$.
    Denote the family of these sets as $ \mathcal{F} $. Thus, we get $m$ sets of $n$ vertices, where each set has one vertex corresponding to one of the heroes.
   We build edges in the graph as follows --  between the vertices $v_a^i $ and $v_{a'}^{i'} $ an edge is drawn if both of the following conditions are true:
    \begin{itemize}
        \item The vertices $v_a^i $ and $v_{a'}^{i'} $ correspond to different heroes ($a \neq a'$)
        \item The vertices $v_a^i $ and $v_{a'}^{i'} $ lie in different sets from the family $\mathcal{F}$ ($ i \neq i /'$)
    \end{itemize}

    Consider all the maximum clique in this graph. Obviously, in any such clique, there is exactly one vertex from each set $ V^i $ -- total $m$ vertices. Also, all these vertexes correspond to different heroes. Thus, each clique sets a team of heroes in the DU task. It should be noted that each team can correspond to several cliques.
  
    Now we introduce the heroes’ power. For this, we assign the weight $ \frac{s_a}{m-1} + \frac{s_{a '}}{m-1}$ to the edge connecting the vertices $s_a^i$ and $s_ {a'}^{i'}$. We show that the sum of the weights of the edges in a click corresponding to a certain team is exactly the strength of this team.
    Indeed, in a clique for each of its vertices, there is exactly $m-1$ edge incident to it. Then each term $\frac{s_i}{m-1}$ corresponding to a vertex with a subscript $i$ is included in the sum exactly $m-1$ times. It follows that the sum of all the weights of the edges in a clique is the sum of all the values $s_i$ corresponding to the numbers of the vertices that form this clique.      
\end{proof}

\begin{figure}[h!]
\begin{center}
\includegraphics[height=4in,width=3in,angle=0]{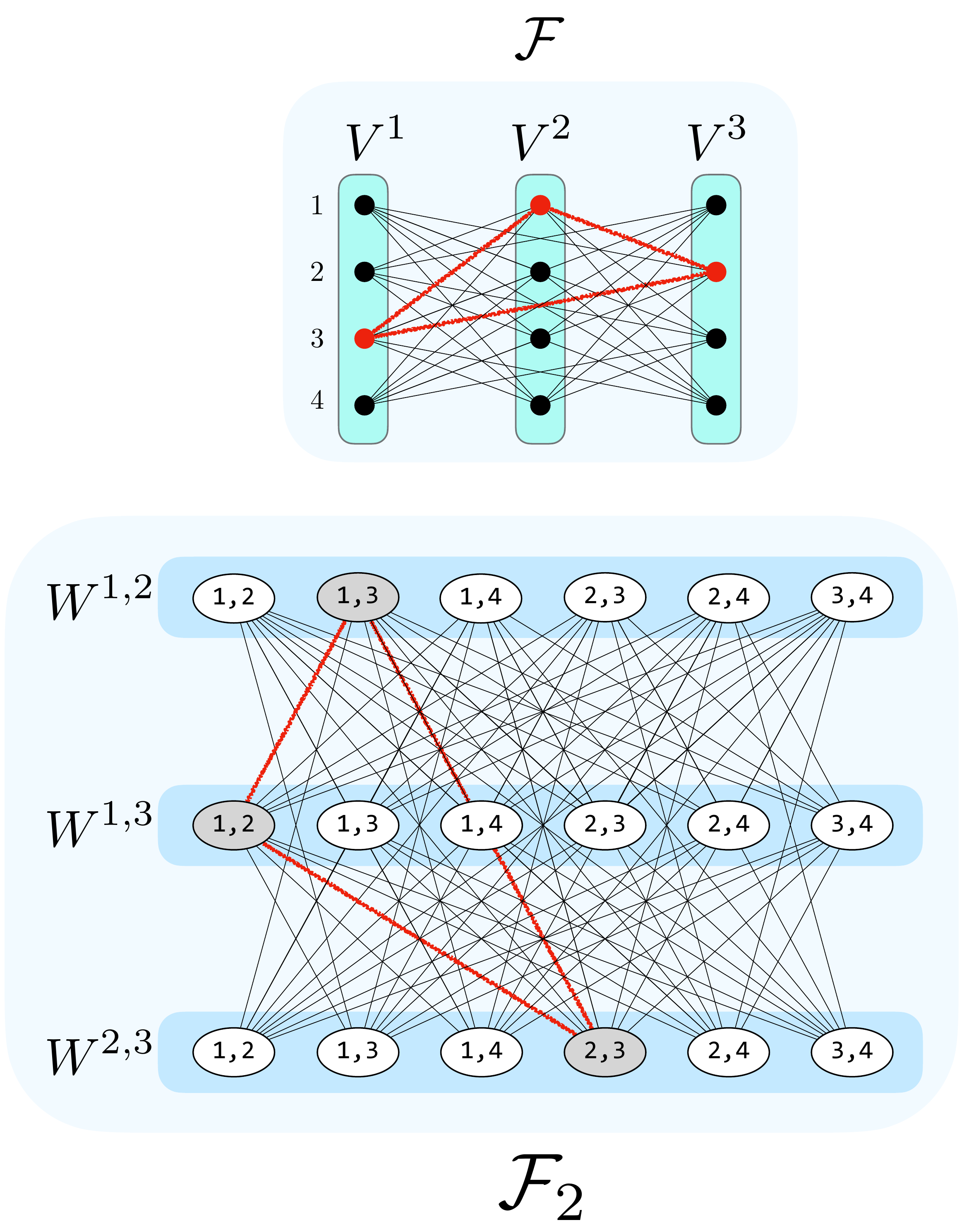}
\caption{An example of graph $G'$ from theorem \ref{second_case}.  Graph $G'$ consists of the sets $\mathcal{F}$ and $\mathcal{F}_2$ for the case $n=4$, $m=3$, $q=2$. The edges between every vertex of sets $\mathcal{F}$ and $\mathcal{F}_2$ are avoided for picture simplicity. The size of the maximal clique is 6. }
\label{fig:reduction}
\end{center}
\end{figure}

\begin{theorem}
\label{second_case}
    The Dota Underlords problem with alliances of the size 2 is reduced to the maximum edge-weighted clique.
\end{theorem}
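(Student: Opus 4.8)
The plan is to enlarge the weighted graph produced in the proof of Theorem~\ref{trivial_case}. Recall that construction: from a DU instance with $n$ heroes and team size $m$ (we may assume $n\ge m$, padding with zero‑strength dummy heroes otherwise) it builds a family $\mathcal{F}=\{V^1,\dots,V^m\}$ of $m$ disjoint copies of the hero set, joins $v^i_a$ to $v^{i'}_{a'}$ exactly when $a\neq a'$ and $i\neq i'$, and weights that edge $\tfrac{s_a}{m-1}+\tfrac{s_{a'}}{m-1}$; every clique of maximum size then takes one vertex per $V^i$, names $m$ distinct heroes, and has total edge weight equal to that team's strength. To incorporate alliances of size $2$ we must add, for each alliance $\{a,b\}$, its bonus whenever both $a$ and $b$ are taken — and, crucially, we must still be free to \emph{decline} it, since in \eqref{eq:DUIP} an indicator $I_{ijk}$ may be $0$ even when its alliance is complete, so a negative $e_{\cdot\,\cdot\,2}$ is never imposed on an optimal team.

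First I would precompute, for every unordered pair of heroes $\{a,b\}$, the non‑negative number $\beta_{\{a,b\}}=\sum_{j:\,\text{alliance }j=\{a,b\}}\bigl(\max(0,e_{aj2})+\max(0,e_{bj2})\bigr)$, the bonus an optimal DU solution actually collects if $a$ and $b$ are both on the team; with alliance size $2$ and the constant bound $q$ this is $O(1)$ per pair. Regarding the $m$ chosen heroes as filling slots $1,\dots,m$ — which is just the assignment, already present in Theorem~\ref{trivial_case}, of which copy $V^i$ holds each selected hero — an activated alliance occupies a pair of slots. Then I would add a second family $\mathcal{F}_2=\{\,W^{\{p,q\}}:1\le p<q\le m\,\}$ of $\binom{m}{2}$ new independent sets, one per slot pair. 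Inside $W^{\{p,q\}}$ I put a ``neutral'' vertex $w^{\{p,q\}}_0$ and, for each pair $\{a,b\}$ that forms an alliance, a vertex $w^{\{p,q\}}_{\{a,b\}}$. Edges: $w^{\{p,q\}}_0$ is joined to every vertex of every $V^i$ and of every $W^{\{p',q'\}}\neq W^{\{p,q\}}$, all of weight $0$; $w^{\{p,q\}}_{\{a,b\}}$ is joined to every vertex of every $V^r$ with $r\notin\{p,q\}$ and of every $W^{\{p',q'\}}\neq W^{\{p,q\}}$ (weight $0$), and within $V^p\cup V^q$ only to the four vertices $v^p_a,v^p_b,v^q_a,v^q_b$, each of these four edges having weight $\tfrac12\beta_{\{a,b\}}$; there are no edges inside any set of $\mathcal{F}_2$. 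Call the result $G'$: its order and number of edges are polynomial in $n,m$ (with $q$ constant), and since $\beta_{\{a,b\}}\ge 0$ all its weights are non‑negative, so a maximum‑edge‑weight clique of $G'$ may be taken of maximum size, i.e.\ one vertex from each of the $m+\binom{m}{2}$ independent sets.

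Finally I would verify the correspondence. The $\mathcal{F}$‑vertices of such a clique name an $m$‑hero team with a slot assignment, exactly as in Theorem~\ref{trivial_case}; and the adjacency rule forces the vertex taken from $W^{\{p,q\}}$ to be either $w^{\{p,q\}}_0$ or, when the heroes $\{a,b\}$ sitting in slots $p,q$ form an alliance, $w^{\{p,q\}}_{\{a,b\}}$ — a weight‑maximizing clique takes the latter whenever $\beta_{\{a,b\}}>0$, in which case exactly two of its four weighted edges (one into $V^p$, one into $V^q$) lie in the clique and contribute $\beta_{\{a,b\}}$, and takes $w^{\{p,q\}}_0$ otherwise. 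Summing, the clique's total weight is the team strength plus $\sum_{p<q}\beta_{\{a,b\}}$ over slot pairs (with $\{a,b\}$ the heroes in slots $p,q$), which is precisely the value of the objective of \eqref{eq:DUIP} for that team in the size‑$2$ alliance case; conversely every team extends to a clique of this weight. Hence the optimum of the MEWC instance $G'$ equals the DU optimum, and the team is read off the $\mathcal{F}$‑part of an optimal clique, completing the reduction. The one genuinely delicate point — where I would be most careful — is exactly this ``opt‑out'': because the edges a clique pays for are dictated by its vertex set and not chosen, loading the raw signed bonus directly onto hero–hero edges would wrongly force negative bonuses on the solution, and it is precisely the auxiliary family $\mathcal{F}_2$ together with the truncated weights $\beta_{\{a,b\}}\ge 0$ that restores the freedom the variables $I_{ijk}$ enjoy in the original model.
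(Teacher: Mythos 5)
Your overall architecture (the Theorem~\ref{trivial_case} graph plus a second family of $\binom{m}{2}$ independent sets indexed by slot pairs, with bonuses carried on edges between pair-vertices and hero-vertices) parallels the paper, but the mechanism you use to enforce consistency — adjacency restrictions plus a neutral ``opt-out'' vertex, with weight $\beta_{\{a,b\}}/2$ on each of the four edges into $V^p\cup V^q$ — has a genuine flaw: nothing forces an optimal clique to occupy every hero slot, and a clique that leaves slots empty can harvest the same alliance's bonus from several slot-pair copies at half value each. Concretely, take $n=m=3$, all strengths $0$, and a single alliance $\{a,b\}$ with $e_{a j 2}=e_{b j 2}=1$, so $\beta_{\{a,b\}}=2$ and the DU optimum is $2$. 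The set $\{\,v^1_a,\;v^2_b,\;w^{\{1,2\}}_{\{a,b\}},\;w^{\{1,3\}}_{\{a,b\}},\;w^{\{2,3\}}_{\{a,b\}}\,\}$ is a clique under your adjacency rules (every pair-vertex is joined to all other $W$-sets and to all $V^r$ with $r$ outside its slot pair, and $v^1_a$, $v^2_b$ are among its designated neighbours when $r$ is inside), and its weight is $1+1+1+1=4>2$; moreover it is maximal, since slot $3$ cannot be filled ($v^3_x$ would need $x\in\{a,b\}$ for the last two pair-vertices but $x\neq a,b$ for the hero-vertices). So your assertion that a maximum-weight clique ``may be taken of maximum size'' because weights are non-negative is false — non-negativity only gives maximality, not full size — and the claimed equality of the MEWC and DU optima fails.

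The paper avoids exactly this trap by different means: vertices of $\mathcal{F}_2$ representing the \emph{same} pair in different sets are \emph{not} adjacent (so one alliance can never be represented twice in a clique), the join between $\mathcal{F}$ and $\mathcal{F}_2$ is complete with baseline weight $0$ (so every maximal clique fills all $m+\binom{m}{2}$ sets), and consistency between the chosen heroes and the chosen pairs is enforced by a large reward $N$ on the edges $(v_a^k,w_{a,b}^{i,j})$ and $(v_b^k,w_{a,b}^{i,j})$ that dominates all strengths and bonuses. To repair your construction you would need at least the same-pair non-adjacency rule plus an exchange argument showing an optimal clique can be assumed to fill every slot, or simply switch to the big-$N$ device. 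Two further remarks: your $\beta_{\{a,b\}}$ only encodes bonuses paid to the two alliance members, whereas the model \eqref{eq:DUIP} (and the paper's proof, which loads the bonus onto edges $(v_c^k,w_{a,b}^{i,j})$ for a third hero $c$) also allows an alliance to benefit heroes outside it; on the other hand, your truncation $\max(0,e_{\cdot j 2})$ addressing the fact that $I_{ijk}$ may be declined is a legitimate point that the paper's own proof silently ignores.
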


\begin{proof}
We construct a graph $G'$ with weighted edges in such a way that the solution of the problem DU follows from the solution of the MEWC problem. Moreover, the size of the MEWC problem is limited by a polynomial on the size of the DU problem. We take a graph $G$ from the theorem \ref{trivial_case} as a base. We construct a set $ W^{1,2} $ with $ \binom{n}{2} $ vertices, where each vertex corresponds to an unordered pair of heroes. We enumerate these vertices in lexicographic order, respectively, by the order of the pairs $ w_{1,2}^1 $, $w_{1,3}^1 $,..., $w_{n-1, n}^1$. 

    We additionally construct $\binom{m}{2}-1$ vertex sets $W^{1,3} $, $W^{1,4} $ and so on up to $W^{m-1, m}$. In every of these set we also assign each vertex to an unordered pair of heroes from the DU problem. Similarly to the first set, we enumerate the vertices in the set $W^{i, j} $ as $ w_1^{i, j} $, $ w_2^{i,j} $, ..., $ w_{n-1, n }^{i, j} $.   
   We denote the family of these sets by $\mathcal{F}_2 $. Thus, we get $ \binom {m}{2} $ sets of $ \binom{n}{2}$ vertices, at the same time, vertexes at every set correspond to all possible pair of heroes.    
    In the graph we draw additional edges between the vertices $ w_{a, b}^{i, j} $ and $w_{a',b'}^{i',j'} $ an edge is drawn if following conditions are satisfied:
    \begin{itemize}
        \item The vertices $ w_{a, b}^{i, j} $ and $w_ {a', b'}^{i',j'} $ correspond to different pairs of heroes ($ a \neq a '\lor b \neq b'$)
        \item The vertices $ w_ {a,b}^{i, j} $ and $w_{a', b'} ^{i', j'} $ lie in different sets from the family $ F_2 $ ($ i \neq i '\lor j \neq j' $)
    \end{itemize}
    
    We also build all edges between all the vertices from the sets $ \mathcal {F} $ and $ \mathcal {F}_2 $. We assign a weight of 0 to all these new edges. Evidently, any maximal clique contains one vertex from each of the sets $V$ and $W$ of the families $ \mathcal {F} $ and $ \mathcal{F}_2 $. \\
    Assign to each edge  $ (v_a^k, w_{a,b}^{i, j}) $ or $ (v_b^k, w_{a,b}^{i,j}) $ some big constant weight $N$. These edges connect a vertex from the family $\mathcal{F}$ corresponding to a certain hero with a vertex from the family $ F_2 $ corresponding to a pair of heroes where this hero belongs. See the figure \ref{fig:reduction} for example of graph $G'$.

    Now we are going to show that any maximal clique in graph $G'$ containing a set of vertices from the family $\mathcal{F}$ corresponding to some set of heroes also contains the set of vertices from the family $\mathcal {F}_2$ corresponding to all pairs of said heroes from this team. In this clique, the edges connecting the vertices from the families $\mathcal{F}$ and $ \mathcal {F}_2$ make a total contribution to the weight equals to $2 \binom{n}{2} N$, because maximum clique includes exactly $2 \binom {n} {2}$ edges with additional big weight $N$ -- two incident to each vertex from $\mathcal {F}_2$.
    
    	It is easy to see that if a vertex in a clique belongs to the family $\mathcal{F}_2 $ and does not correspond to a pair of associated vertices of $ \mathcal {F} $ that are in the clique, then the clique will contain at least one edge that has an additional big constant weight $N$ less. Thus, the click will not have the maximum weight. Thus, the statement is proved.
    	
    	 Note that adding weights on the edges that are small compared to $ N $ preserves the truth of the statement. Since $ N $ is chosen arbitrarily, we can assume that all values of power and bonuses are small compared to $ N $. Therefore we add bonuses that an alliance of a pair of heroes with numbers $ a $ and $ b $ gives the hero with number $ c $ to the weights of the edges  $ (v_c^{k}, w_{a, b}^{i, j}) $ connecting the vertices from the sets $\mathcal {F}$ and $\mathcal {F}_2$. If the selected team has the heroes $a$, $b$, and $c$, then this bonus will be included in the weight of the clique. Since the same number of edges with an additional weight $ N$ are included in all the cliques under consideration, the maximum clique will be the one where the sum of the heroes' strengths (the sum of the edges’ weights between the vertices of the $\mathcal {F} $ family) and bonuses (the edges’ weights between the vertices of the families $\mathcal{F}$ and $\mathcal {F}_2$ without taking into account the constants $N$) is the maximum. Thus, the weight of the clique corresponds to the total bonus from the team, from which point the reduction is clear.
    
\end{proof}


\begin{theorem}
\label{general_case}
    Dora Underlords problem with alliances of size $q$, reduces to the maximum edge-weighted clique.
\end{theorem}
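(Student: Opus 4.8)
The idea is to bootstrap the construction of Theorem~\ref{second_case} one ``level'' at a time. There, besides the family $\mathcal{F}$ whose sets select the individual team members, a second family $\mathcal{F}_2$ was used whose sets select the pairs of team members, and the alliance bonuses (which for size-$2$ alliances are tied to pairs) were placed on the consistency edges between the two families. To treat alliances of size up to $q$ I would add a family $\mathcal{F}_\ell$ for every $\ell$ with $2\le\ell\le q$: it consists of $\binom{m}{\ell}$ sets, each set containing $\binom{n}{\ell}$ vertices, one for every $\ell$-element subset of the hero set. Inside a family two vertices are joined exactly when they lie in different sets and carry different subset labels; between any two distinct families every edge is present and gets weight $0$; and, exactly as in Theorems~\ref{trivial_case} and~\ref{second_case}, this forces every maximum clique to take exactly one vertex from each set, so it has $\sum_{\ell=1}^{q}\binom{m}{\ell}$ vertices and the labels read off from the sets of $\mathcal{F}$ form a team $T$ with $|T|=m$.

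The next step reproduces the heavy ``consistency'' layer. For a large constant $N$ I would give weight $N$ to the edge between $v_a\in\mathcal{F}$ and $w_S\in\mathcal{F}_\ell$ whenever $a\in S$, leaving weight $0$ on every other inter-family edge (note each such edge exists for every $i$, and is heavy only when $i\in S$). A vertex $w_S$ of $\mathcal{F}_\ell$ then contributes at most $\ell$ heavy edges to a clique, with equality iff every hero of $S$ is chosen in the $\mathcal{F}$-layer, i.e. $S\subseteq T$; since the $\binom{m}{\ell}$ sets of $\mathcal{F}_\ell$ must carry pairwise distinct labels and $T$ has exactly $\binom{m}{\ell}$ subsets of size $\ell$, a clique of maximum weight is forced to realise \emph{all} $\ell$-subsets of $T$ and nothing else. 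Hence every candidate optimal clique carries the same fixed heavy weight $N\sum_{\ell=2}^{q}\ell\binom{m}{\ell}$, and once $N$ exceeds every attainable sum of powers and bonuses, a maximum-weight clique is exactly one whose remaining (small) weight, maximised over teams $T$, is largest.

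Putting the bonuses onto those small weights without miscounting is the step I expect to be the real obstacle. Writing $A_j$ for the members of alliance $j$ and $c_j=|T\cap A_j|$, the tier-$k$ bonus $e_{ijk}$ must be credited to hero $i$ exactly once when $c_j\ge k$ and not at all otherwise; but simply attaching $e_{ijk}$ to the $\mathcal{F}$--$\mathcal{F}_k$ edges meeting the $k$-subsets of $A_j$ would credit it $\binom{c_j}{k}$ times, which is $>1$ as soon as the team contains more than $k$ members of the alliance. I would cure this by spreading the bonus across the levels $k,k+1,\dots,q$ with signed coefficients: pick reals $\lambda_k,\dots,\lambda_q$ with
\[
  \sum_{\ell=k}^{q}\lambda_\ell\binom{c}{\ell}=\mathbf{1}[c\ge k]\qquad\text{for all }c\in\{0,1,\dots,q\},
\]
which exist and are unique because the matrix $\bigl(\binom{c}{\ell}\bigr)_{c,\ell=k}^{q}$ is lower triangular with unit diagonal, and then add $\lambda_\ell\,e_{ijk}$ to the edge $(v_i,w_S)$ for every hero $i$ and every $\ell$-subset $S\subseteq A_j$. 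For a fixed $i\in T$ the qualifying vertices $w_S$ present in the clique are precisely the $\binom{c_j}{\ell}$ subsets of size $\ell$ of $T\cap A_j$, so summing over $\ell$ yields exactly $e_{ijk}\,\mathbf{1}[c_j\ge k]$ per hero and per tier; summing over all heroes, alliances and tiers, the non-heavy weight of the clique equals $\sum_{i\in T}s_i+\sum_{i,j,k}e_{ijk}I_{ijk}$, the Dota Underlords objective on $T$. The restricted alliance type (where $a_{ij}=0\Rightarrow e_{ijk}=0$) is subsumed automatically, and the size-$2$ case of Theorem~\ref{second_case} is recovered since there $\lambda_2=1$ and no signed correction is needed. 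Finally, since $q$ is a fixed constant we have $\binom{n}{\ell}\le n^{q}$ and $\binom{m}{\ell}\le m^{q}$, so the constructed graph has polynomially many vertices and edges and the whole reduction is computable in polynomial time --- this is precisely where the bound on the alliance size is used; chained with Theorems~\ref{trivial_case} and~\ref{second_case}, this reduces the general Dota Underlords problem to the maximum edge-weighted clique problem.
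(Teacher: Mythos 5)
Your construction is correct, and its skeleton is the same as the paper's: keep the team-selection layer $\mathcal{F}$ from Theorem~\ref{trivial_case}, add label-set families whose vertices name subsets of heroes, force one vertex per set inside a clique, use a large constant $N$ on the hero--subset incidence edges so that any optimal clique must label the subset layer consistently with the team $T$, and then load the alliance bonuses onto the cross edges. Where you genuinely depart from the paper is in the bonus accounting. The paper builds only the single family $\mathcal{F}_q$ of $q$-subsets and attaches each alliance's bonus to the edges meeting that alliance's full member set, which faithfully handles bonuses that activate only when the whole size-$q$ alliance is fielded (mirroring the assumption made in the size-$2$ case of Theorem~\ref{second_case}), but it does not address the general tensor $e_{ijk}$: threshold activation at $k<q$, nor the over-counting that occurs if one naively ties a tier-$k$ bonus to all $k$-subsets of the alliance ($\binom{c_j}{k}$ copies instead of one). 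Your version fixes exactly this by introducing all families $\mathcal{F}_2,\dots,\mathcal{F}_q$ and spreading $e_{ijk}$ over levels $k,\dots,q$ with the signed coefficients $\lambda_\ell$ determined by $\sum_{\ell=k}^{q}\lambda_\ell\binom{c}{\ell}=\mathbf{1}[c\ge k]$ (the triangularity argument for their existence is right), so each bonus is credited exactly once per hero precisely when the threshold is met; you also cover bonuses to heroes outside the alliance, and you note where the bound on $q$ is used for polynomiality. So you prove a stronger statement than the paper's proof actually establishes, at the cost of a more elaborate gadget. One small repair: since the $\lambda_\ell$ alternate in sign (and $e_{ijk}$ may itself be negative), choose $N$ larger than the sum of the \emph{absolute values} of all non-heavy edge weights, not merely larger than any attainable objective value; this is what guarantees both that optimal cliques take a vertex from every set (adding a subset vertex always gains at least $\ell N$ minus small terms) and that no loss of a heavy edge can be offset by the signed bonus weights.
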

	The proof will be constructed similarly to the proof of the theorem \ref {second_case}.
We will create additional vertices associated with all possible combinations of $q$ heroes. The bonuses from the formation of the corresponding alliances will be the same as in the theorem \ref {second_case} is on the edges.
The main difference will be that for the vertices we will use $q$ indices instead of two indices. The formal considerations are given below.
\begin{proof}
    
We construct a graph $G'$ with weighted edges in such a way that the solution of the problem DU follows from the solution of the MEWC problem. Moreover, the size of the MEWC problem is limited by a polynomial on the size of the DU problem.
    We take a graph $G$ from the theorem \ref{trivial_case} as the base. We construct a set $W^{1,q} $ with $\binom{n}{q} $ vertices, where each vertex corresponds to an unordered set of $q$ heroes. We enumerate these vertices in the lexicographic order corresponding to the order of  combinations of $ \binom {n}{q} $ elements of $ w^1_ {\underbrace {1,2, ..., q}_\text{total $ q $ indices }}, w^1_{\underbrace {1,2, ..., q + 1}_\text {total $ q $ indices}}, ..., w^1_ {\underbrace{n-q+1, ..., n-1, n}_\text{total $ q $ indices}} $. It is important that each of these elements has exactly $q$ indices.

We additionally construct $ \binom{m}{q}-1 $ vertex sets $ W^{1,2, ..., q} $, $W^{1,2, q + 1} $ and so on up to $ W^{n-q + 1, ..., n-1, n} $. In every of these sets we also assign each vertex to an unordered set of $q$ heroes of the DU problem according to $\binom{n}{q}$ possible combinations. Similarly to the first set, we enumerate the vertices in the set $V^{\overbrace {i, j, k, l, ...}^\text {total $ q $ indices}} $ as $w_{\underbrace {1,2, ..., q}_\text {total $ q $ indices}}^{\overbrace{i, j, k, l, ...}^\text{total $ q $ indices}} $, $w_{ \underbrace {1,2, ..., q + 1}_\text{total $ q $ indices}}^{\overbrace{i, j, k, l, ...} ^\text{total $ q $ indices}} $,  ..., $w_{\underbrace {n-q + 1, ..., n-1, n}_\text{total $ q $ indices}}^{\overbrace { i, j, k, l, ...}^\text {total $ q $ indices}} $.
    We denote the family of these sets as $ \mathcal {F} _q $. Thus, we get $ \binom{m}{q} $ sets with $ \binom{n}{q} $.
    We draw additional edges between the vertices $w_{\underbrace{a, b, c, ....}_\text{total $ q $ indices}}^{\overbrace{i, j, k , ...}^\text{total $ q $ indices}} $ and $w_{\underbrace{a',b', c', ...}_\text{total $ q $ indices}}^{\overbrace {i ', j', k ', ...}^\text{total $ q $ indices}}$. An edge is drawn if both of the following conditions are true:
    \begin{itemize}
        \item The vertices $w_{a, b, c, ...}^{i, j, k ...} $ and $ w_ {a', b', c', ...}^{i', j', k', ...}$ correspond to different set of heroes ($ a \neq a '\lor b \neq b' \lor c \neq c', ... $)
        \item The vertices $ w_{a, b, c, ...}^ {i, j, k ...} $ and $ w_{a', b', c',...}^{i', j', k', ...} $ belong to different sets from the family $ F_q $ ($ i \ neq i'\lor j \ neq j' \lor k \neq k', ... $)
    \end {itemize}
    
    We also build all the edges between every vertices of the sets $ F $ and $ F_q $. We assign a weight of 0 to all those edges. Indeed, any maximal clique contains one vertex from each of the sets $ V $ and $ W $ of the families $\mathcal {F}$ and $\mathcal {F}_q$. We assign to each edge with a shape $ (v_a^k, w_ {a, b, c, ...}^ {i, j, k, ...}) $ or $ (v_b^k, w_{a, b, c , ...}^{i, j, k, ...}) $ some big constant weight $ N $. Thus, every edge connect a vertex from the family $ \mathcal{F} $ corresponding to some hero with a vertex from the family $ F_q $ corresponding to the set of $ q $ heroes (a hero forms alliance with set of $q$ heroes). 
    
     Now we are going to show that any maximal clique $\mathcal{C}$ in graph $G'$ has a set of vertices from the family $\mathcal{F}$ corresponding to a certain set of heroes, and at the same time $\mathcal{C}$ contains a set of vertices from the family $ \mathcal{F}_q $ corresponding to all combinations of $q$ heroes. In the clique $\mathcal{C}$, the edges connecting the vertices from the families $ \mathcal{F} $ and $ \mathcal{F}_q $ make a total contribution to the weight of $\mathcal{C}$ and this contribution equals to $ q \binom{n}{q} N $. It is because the clique includes exactly $ q \binom{n}{q} $ edges with an additional big constant weight $N$ --- q incident to each vertex from $ \mathcal {F}_q $.  It is not hard to see that when a vertex in a clique belongs to the family $ \mathcal{F}_q $, and does not belong to a set of $q$ vertices from $ \mathcal {F} $, then the clique $\mathcal{C}$ contains at least one edge less with an additional big constant weight $ N $ among the edges of the clique. Thus, the clique $\mathcal{C}$ does not have the maximum weight. Thus, the statement is proved. 
    
    Note that adding weights on the edges that are small compared to $N$ preserves the truth of the statement. Since $N$ is chosen arbitrarily, we can assume that all the values of heroes’ power and bonuses are small compared to $N$.
	Therefore we add bonuses to the weights of the edges of shape $ (v_x^{y}, w_{a, b, c, ...} ^ {i, j, k, ...}) $ connecting the vertices from families of $\mathcal{F} $ and $ \mathcal{F}_q $. If the team has the heroes $a$, $b$, $c$, ... and hero $x$, then this bonus will be included in the weight of the maximum edge-weighted clique. Since all the cliques under consideration have the same number of edges with an additional weight $N$, the maximum clique will be the one where the maximum is the sum of the heroes' powers (the sum of the weights of the edges between the vertices of the $ \mathcal{F} $ family) and bonuses (the weights of the edges between the vertices of the families $ \mathcal{F} $ and $ \mathcal {F}_q $ without taking into account the constants $ N $).
    
Thus, the weight of the clique corresponds to the total bonus from the team, from where the reduction is clear.
    
\end{proof}

\section{Model application for real data}
\label{SectionComputationalResults}
  
We apply this model to analyze the real Dota Underlords problem. Note that our result should not be considered as some objective assessment of the quality of the team of heroes. The reason is the inevitable simplification of the heroes’ power as well as the influence that the alliances have. Each hero in Underlords has a certain ability, which is activated when various conditions satisfied, and in addition, the ability has some recharge time. Alliance abilities and bonuses are also very diverse in their influence on the game -- they can cause damage, heal allies, prevent enemies from using their abilities, and more. Fortunately, the game has a system of five ``tiers", arranged so that the characters inside the tier are approximately equal in strength.

Within the simplified model, we accept the following:
\begin{enumerate}
\item The forces of all the heroes of the first tier are equal to 1,  the second -- 2, the third -- 3, the fourth -- 4, the fifth -- 5;
\item Alliances give the same percentage bonus to everyone they equally influence;
\item The alliance bonus is approximately 10-30 percent of the hero’s power.
\end{enumerate}

Information about the strength of the heroes and the structure of alliances is given in the table \ref{table:aliances}. 
A complete table defining a matrix of bonuses from the alliances $e_{ijk} $ can be found in our \href{https://github.com/aponom84/UnderLords/blob/master/UnderLordsData.xlsx}{repository} \cite{UnderLordsInput}.

\begin{table}
\center
\resizebox{!}{9cm} {
\begin{tabular}{llrl}
{\#} &                 Heroes &  Power &                       Alliances \\
\midrule
0  &                 tusk &      1 &               savage, warrior  \\
1  &           venomancer &      1 &               scaled, summoner \\
2  &         shadow demon &      1 &               demon, heartless \\
3  &          drow ranger &      1 &    heartless, hunter, vigilant \\
4  &          bloodseeker &      1 &           blood-bound, deadeye \\
5  &         nyx assassin &      1 &               assassin, insect \\
6  &       crystal maiden &      1 &                    human, mage \\
7  &                 tiny &      1 &           primordial, warrior  \\
8  &             batrider &      1 &                  knight, troll \\
9  &               magnus &      1 &                  druid, savage \\
10 &             snapfire &      1 &                 brawny, dragon \\
11 &           arc warden &      1 &           primordial, summoner \\
12 &                razor &      1 &               mage, primordial \\
13 &               weaver &      1 &                 hunter, insect \\
14 &              warlock &      1 &  blood-bound, healer, warlock  \\
15 &               dazzle &      2 &                  healer, troll \\
16 &         earth spirit &      2 &               spirit, warrior  \\
17 &         storm spirit &      2 &                   mage, spirit \\
18 &         witch doctor &      2 &                troll, warlock  \\
19 &          bristleback &      2 &                 brawny, savage \\
20 &     legion commander &      2 &                champion, human \\
21 &        queen of pain &      2 &                assassin, demon \\
22 &     nature's prophet &      2 &                druid, summoner \\
23 &                 luna &      2 &               knight, vigilant \\
24 &           windranger &      2 &               hunter, vigilant \\
25 &            ogre magi &      2 &       blood-bound, brute, mage \\
26 &                pudge &      2 &            heartless, warrior  \\
27 &          beastmaster &      2 &                 brawny, hunter \\
28 &         chaos knight &      2 &                  demon, knight \\
29 &              slardar &      2 &               scaled, warrior  \\
30 &              abaddon &      3 &              heartless, knight \\
31 &                viper &      3 &               assassin, dragon \\
32 &           juggernaut &      3 &               brawny, warrior  \\
33 &         ember spirit &      3 &               assassin, spirit \\
34 &                   io &      3 &              druid, primordial \\
35 &         shadow fiend &      3 &                demon, warlock  \\
36 &                lycan &      3 &        human, savage, summoner \\
37 &          broodmother &      3 &               insect, warlock  \\
38 &            morphling &      3 &               mage, primordial \\
39 &          lifestealer &      3 &               brute, heartless \\
40 &           omniknight &      3 &          healer, human, knight \\
41 &          terrorblade &      3 &                  demon, hunter \\
42 &        shadow shaman &      3 &                summoner, troll \\
43 &               enigma &      3 &               primordial, void \\
44 &     treant protector &      3 &                   brute, druid \\
45 &                 doom &      4 &                   brute, demon \\
46 &            disraptor &      4 &               brawny, warlock  \\
47 &          void spirit &      4 &                   spirit, void \\
48 &               mirana &      4 &               hunter, vigilant \\
49 &           tidehunter &      4 &               scaled, warrior  \\
50 &            necrophos &      4 &    healer, heartless, warlock  \\
51 &           lone druid &      4 &        druid, savage, summoner \\
52 &                 sven &      4 &          human, knight, scaled \\
53 &                slark &      4 &               assassin, scaled \\
54 &     templar assassin &      4 &       assassin, vigilant, void \\
55 &  keeper of the light &      4 &                    human, mage \\
56 &                  axe &      5 &                  brawny, brute \\
57 &        faceless void &      5 &                 assassin, void \\
58 &            sand king &      5 &                 insect, savage \\
59 &                 lich &      5 &                heartless, mage \\
60 &               medusa &      5 &                 hunter, scaled \\
61 &        dragon knight &      5 &          dragon, human, knight \\
62 &        troll warlord &      5 &                troll, warrior  \\
\bottomrule
\end{tabular}
}
\caption{Heroes power and alliances  structure}
\label{table:aliances}
\end{table}


We provide a solution of the linear integer programming model defined by the system of inequalities \eqref{eq:DUIP}, as a table \ref{table:solution}.

\begin{table}
\resizebox{16cm}{!} {
\begin{tabular}{l| *{8}{p{1.6cm}} | *{3}{ p{1cm}} }
{Hero} &                   &               &                &                &                &                  &                  &                  &  Alliance contribution &  Hero power &   Sum \\
\midrule
broodmother   &  heartless 2 +0.3  &  human 2 +0.3  &  insect 2 +0.3  &  scaled 2 +0.6  &   troll 2 +0.3  &  warlock  2 +0.6  &  warlock  4 +0.6  &                   &                  3.0 &           3 &   6.0 \\
disruptor     &  heartless 2 +0.4  &  human 2 +0.4  &  insect 2 +0.4  &  scaled 2 +0.8  &   troll 2 +0.4  &  warlock  2 +0.8  &  warlock  4 +0.8  &                   &                  4.0 &           4 &   8.0 \\
dragon knight &  heartless 2 +0.5  &  human 2 +0.5  &  insect 2 +0.5  &  knight 2 +1.0  &  scaled 2 +1.0  &     troll 2 +0.5  &  warlock  2 +0.5  &  warlock  4 +0.5  &                  5.0 &           5 &  10.0 \\
lich          &  heartless 2 +0.5  &  human 2 +0.5  &  insect 2 +0.5  &  scaled 2 +1.0  &   troll 2 +0.5  &  warlock  2 +0.5  &  warlock  4 +0.5  &                   &                  4.0 &           5 &   9.0 \\
medusa        &  heartless 2 +0.5  &  human 2 +0.5  &  insect 2 +0.5  &  scaled 2 +1.0  &   troll 2 +0.5  &  warlock  2 +0.5  &  warlock  4 +0.5  &                   &                  4.0 &           5 &   9.0 \\
necrophos     &  heartless 2 +0.4  &  human 2 +0.4  &  insect 2 +0.4  &  scaled 2 +0.8  &   troll 2 +0.4  &  warlock  2 +0.8  &  warlock  4 +0.8  &                   &                  4.0 &           4 &   8.0 \\
sand king     &  heartless 2 +0.5  &  human 2 +0.5  &  insect 2 +0.5  &  scaled 2 +1.0  &   troll 2 +0.5  &  warlock  2 +0.5  &  warlock  4 +0.5  &                   &                  4.0 &           5 &   9.0 \\
sven          &  heartless 2 +0.4  &  human 2 +0.4  &  insect 2 +0.4  &  knight 2 +0.8  &  scaled 2 +0.8  &     troll 2 +0.4  &  warlock  2 +0.4  &  warlock  4 +0.4  &                  4.0 &           4 &   8.0 \\
troll warlord &  heartless 2 +0.5  &  human 2 +0.5  &  insect 2 +0.5  &  scaled 2 +1.0  &   troll 2 +1.0  &  warlock  2 +0.5  &  warlock  4 +0.5  &                   &                  4.5 &           5 &   9.5 \\
witch doctor  &  heartless 2 +0.2  &  human 2 +0.2  &  insect 2 +0.2  &  scaled 2 +0.4  &   troll 2 +0.4  &  warlock  2 +0.4  &  warlock  4 +0.4  &                   &                  2.2 &           2 &   4.2 \\
\bottomrule
\end{tabular}
}
\caption{Optimal team structure for the Dota Underlords game with all the active bonuces}
\label{table:solution}
\end{table}

\section{Conclusion}
\label{SectionConclusion}
In this paper, we demonstrated how a key to winning in a video game can lie in using linear integer math programming.
The initial data and the results of solving the model in the form of a Jupyter-notebook can be found in our open repository \cite{UnderLordsInput}.
We hope that this article will help to attract the attention of young minds to integer programming, discrete optimization methods, and also to the millennium problem P $ \neq? $ NP.
It is important that the mathematical formulation of the problem given by the set of inequalities \eqref{eq:DUIP} can be considered by itself, abstracting from the domain. And in this paper, it is shown that the seemingly NP-hard task, in the ``yes" or ``no" version, is NP-complete.
Thus, this work contributes to the study of NP-complete problems.

\bibliographystyle{unsrt}
\bibliography{references}

\end{document}